\newtheorem{lemma}{Lemma}
\newtheorem{proposition}[lemma]{Proposition}
\newtheorem{theorem}[lemma]{Theorem}
\newtheorem*{mainthm}{Main theorem}
\newtheorem{cor}[lemma]{Corollary}
\theoremstyle{definition}
\newtheorem{definition}[lemma]{Definition}
\newtheorem{remark}[lemma]{Remark}
\DeclareMathOperator{\Id}{Id}
\DeclareMathOperator{\Hom}{Hom}
\DeclareMathOperator{\End}{End}
\newcommand{\co}{\!:}
\DeclareMathOperator{\Rel}{\mathcal{R}}
\DeclareMathOperator{\Span}{Span}
\newcommand\rst[2]{{{#1}\vert_{{#2}}}}
\title{Holonomic approximation through convex integration}
\author{Patrick Massot\thanks{The first author was partially funded by ANR
grant Microlocal ANR-15-CE40-0007} \and Mélanie Theillière\thanks{The second author was supported by ANR/FNR project SoS, INTER/ANR/16/11554412/SoS, ANR-17-CE40-0033.}}
\begin{document}
\maketitle

\begin{abstract}
  Convex integration and the holonomic approximation theorem are two well-known
  pillars of flexibility in differential topology and geometry.
  They may each seem to have their own flavor and scope.
  The goal of this paper is to bring some new perspective on this topic.
  We explain how to prove the holonomic approximation theorem for first order jets using
  convex integration. More precisely we first prove that this theorem can easily be
  reduced to proving flexibility of some specific relation. Then we prove
  this relation is open and ample, hence its flexibility follows from
  off-the-shelf convex integration.
\end{abstract}

\section*{Introduction}

\subsection*{The $h$-principle techniques landscape}

Since Gromov and Eliashberg seminal work in
\cite{Gromov_ICM,Gromov_Eliashberg_elim}, we know it is often fruitful to ask
which geometrical construction problems satisfy the $h$-principle.
This principle is too broad to be fully described abstractly, but a lot of
examples can be described using the formalism of jet-spaces. In this paper,
it will be enough to consider the space $J^1(M, N)$ of $1$-jets of maps between
two manifolds $M$ and $N$.
This is a bundle over $M \times N$ whose fiber at $(m, n)$ is the vector space of
linear maps from $T_mM$ to $T_nN$.
Slightly bending notations, elements of $J^1(M, N)$ are usually written as
triples $(m, n, \varphi)$, with $\varphi \in \Hom(T_mM, T_nN)$.
Composing with the projection $M \times N \to M$, this jet space also fibers over $M$.
Any smooth map $f : M \to N$ gives rise to the section $j^1f : M \to J^1(M, N)$
sending $m$ to $(m, f(m), T_mf)$, abbreviated as $j^1f = (f, Tf)$.
Sections of this form are called holonomic sections.
A first order partial differential relation for maps from $M$ to $N$
is simply a subset $\Rel$ of $J^1(M, N)$.
A formal solution of $\Rel$ is a section of $J^1(M, N)$ taking values in
$\Rel$. Following Gromov and Eliashberg, we say $\Rel$ satisfies
the $h$-principle if every formal solution is homotopic to a holonomic one.

Understanding whether a given $\Rel$ satisfies the $h$-principle is usually
difficult. One reason for this is the wide range of techniques to try.
Gromov's partial differential relations book
\cite{Gromov_PDR} and Eliashberg and Mishachev's book
\cite{Eliashberg_Mishachev} explain the following techniques: removal of
singularities, inversion of differential operators, convex
integration and holonomic approximation. The latter two are the most
important for differential topology (including symplectic and contact
topology). The holonomic approximation theorem, for first order relations,
immediately implies for instance the $h$-principle for immersions in positive
codimension, directed embeddings of open manifolds, existence and deformations
of symplectic or contact structures on open manifolds. On the other hand convex
integration, rather directly imply the Nash-Kuiper isometric embedding theorem,
immediately implies flexibility of important classes of immersions and inspired
important work in PDEs.

Holonomic approximation and convex integration apply to problems that are fully
flexible but they are also crucial first steps to study subtler problems. One
outstanding example is Murphy's $h$-principle for loose Legendrian embeddings in
\cite{Murphy_loose}. That paper uses the holonomic approximation theorem, convex
integration and wrinkling techniques before a specific argument using the
geometric looseness assumption.

All those examples seems to indicate that holonomic approximation and convex
integration have distinct flavors. The goal of this paper is to bring some more
unity to this topic by proving the holonomic approximation theorem for first
order jets using convex integration. We first prove that it can easily
be reduced to proving the $h$-principle for some specific relation. Then we
prove this relation is open and ample, hence solvable using convex integration
as a black-box.

We do \emph{not} claim that this proof of the holonomic approximation theorem is
\emph{better} than the existing proof (whatever it could mean), or reveals the
deep nature of this theorem. But we think it brings some extra insight.

The first author was also motivated by his interest in formalized mathematics.
Explaining non-trivial mathematics to a computer is still much more challenging
than convincing a human being, so deducing one difficult theorem from another is
a great gain. Also, the traditional proof of the holonomic approximation theorem
is rather monolithic and requires great care to get all details and
constructions right at the same time. By contrast, convex integration,
especially the implementation in \cite{these_melanie}, is built out of a series
of very cleanly encapsulated steps. So there is hope this version of the story
will be easier to formalize.  Convex integration is also more explicit so this
new proof may be relevant from the point of view of effective $h$-principles and
visualization (we will come back to this topic at the end of this introduction).

\subsection*{Holonomic approximation}

Holonomic approximations cannot exist globally on a manifold, they exist near
polyhedra with positive codimension. Because it claims a $C^0$-close and relative
construction, it is a completely local statement. The global version is proved
by a straightforward induction on cells of polyhedra. We do not change anything
here. By contrast, the induction on directions on each cell does disappear in our
proof, it becomes part of the black-boxed convex integration.

We set $A = [0, 1]^m \times \{0\} \times \{0\}\subset \mathbb{R}^m \times \mathbb{R} \times \mathbb{R}^k$ which is a model for
cells with positive codimension. We split the normal direction as
$\mathbb{R} \times \mathbb{R}^k$ to emphasize we will use only one extra direction. Nothing will happen
in the $\mathbb{R}^k$ direction.
We will always denote by $(x, y, z)$ the coordinates on $\mathbb{R}^m \times \mathbb{R} \times \mathbb{R}^k$.
The target of our maps will be $\mathbb{R}^n$ with coordinate $w$.
We will deform $A$ using functions $\delta : \mathbb{R}^m \to \mathbb{R}$. The deformed $A$ corresponding
to such a function is the graph
\[
  A_\delta = \{(x, y, z) \in \mathbb{R}^m \times \mathbb{R} \times \mathbb{R}^k \;|\; x \in A,\, y = \delta(x) \text{ and } z = 0 \}.
\]

\begin{definition}
Let $\sigma=(f, \varphi)$ be a section of $J^1(\mathbb{R}^m \times \mathbb{R} \times \mathbb{R}^k, \mathbb{R}^n)$ defined near $A$. A
pair $(\delta, f_1)$, with $\delta : \mathbb{R}^m \to \mathbb{R}$ and $f_1$ defined near $A_\delta$ to $\mathbb{R}^n$, is a
\textit{solution of the holonomic $\epsilon$-approximation problem for $\sigma$ near
$A$} if $\|\delta\|_0 < \epsilon$ and $\|j^1 f_1 - \sigma\|_0 < \epsilon$ near $A_\delta$.
\end{definition}

The above statement uses the $C^0$ norm, denoted by $\|\cdot\|_0$, for smooth
functions defined on compact sets. This norm depends on the norm chosen at the
source and target spaces, but those spaces will always be finite dimensional so
we will be free to choose them without changing anything important. In this
article, it will be convenient to endow the target $\mathbb{R}^n$ with the sup norm and
the source $\mathbb{R}^m \times \mathbb{R} \times \mathbb{R}^k$ with the Euclidean norm.

As stated in the first part of the next theorem, and explained in
Section~\ref{sub:the_extension_problem}, the holonomic $\epsilon$-approximation problem,
for any $\sigma$ near $A$, is all about finding the deformation map $\delta$ and a function
$h$ on its graph $A_\delta$ which approximate the induced section of $J^1(A_\delta, \mathbb{R}^n)$. There
is no obstruction to explicitly extend such a function to a full solution.
This observation allows to transform the holonomic approximation problem
for $\sigma = (f, \varphi)$ near $A$, which a priori takes place in
$J^1(\mathbb{R}^m \times \mathbb{R} \times \mathbb{R}^k, \mathbb{R}^n)$, into solving a partial differential relation on a pair
of functions $\delta : \mathbb{R}^m \to \mathbb{R}$ and $h : \mathbb{R}^m \to \mathbb{R}^n$ defined near $[0, 1]^m$,
which takes place in $J^1(\mathbb{R}^m, \mathbb{R} \times \mathbb{R}^n)$.

An element of $J^1(\mathbb{R}^m, \mathbb{R} \times \mathbb{R}^n)$ is a tuple $(x, (y, w), (Y, W))$ where $x$ is in
$\mathbb{R}^m$, $(y, w)$ is in $\mathbb{R} \times \mathbb{R}^n$ and $(Y, W)$ is in $\Hom(\mathbb{R}^m, \mathbb{R} \times \mathbb{R}^n)$. In
particular $Y$ is a linear form on $\mathbb{R}^m$ and its graph in $\mathbb{R}^m \times \mathbb{R}$ will be
denoted by $\Gamma_Y$. Note that, when $Y = d\delta(x)$,
$T_{(x, \delta(x), 0)}A_\delta = \Gamma_Y \times \{0\}$.
We also denote by $p_m$ the projection of $\mathbb{R}^m \times \mathbb{R} \times \mathbb{R}^k$ onto $\mathbb{R}^m$.

\begin{definition}
  \label{def:rel_ha}
  The holonomic approximation relation associated to a section $\sigma = (f, \varphi)$ defined near $A$ and
  a positive real number $\epsilon$ is:
  \[
    \Rel_{ha}(\sigma, \epsilon) = \left\{
     \big(x, (y, w), (Y, W)\big) \in J^1(\mathbb{R}^m, \mathbb{R} \times \mathbb{R}^n) \;\left|\;
    \begin{aligned}
        &|y| < \epsilon \text{, }\; \|w - f(x, y, 0)\| < \epsilon \\
        &\left\|\big(W \circ p_m - \varphi(x, y, 0)\big)_{|\Gamma_Y \times \{0\}}\right\| < \epsilon
    \end{aligned}\right.\right\}
  \]
  where $\Gamma_Y$ is the graph of $Y$ and $p_m$ is the projection of $\mathbb{R}^m \times \mathbb{R} \times \mathbb{R}^k$ onto $\mathbb{R}^m$.
\end{definition}

\begin{mainthm}
  Let $\sigma = (f, \varphi)$ be a section of $J^1(\mathbb{R}^m \times \mathbb{R} \times \mathbb{R}^k , \mathbb{R}^n)$ defined near $A$ and let $\epsilon$ be a positive real number.

  \begin{enumerate}
    \item
    Let $\delta : \mathbb{R}^m \to \mathbb{R}$ and $h : \mathbb{R}^m \to \mathbb{R}^n$ be smooth maps.
    The function $(x, \delta(x), 0) \mapsto h(x)$, defined on $A_\delta$, can be extended to a
    function $f_1$ defined near $A_\delta$ such that $(\delta, f_1)$ is a solution to the
    holonomic $\epsilon$-approximation problem for $\sigma$ near $A$ if and only if $(\delta, h)$
    is a solution of $\Rel_{ha}(\sigma, \epsilon)$.

    \item
      The relation $\Rel_{ha}(\sigma, \epsilon)$ is open and ample.
  \end{enumerate}
\end{mainthm}

The above theorem immediately implies the following local statement which is the
heart of Eliashberg and Mishachev's holonomic approximation theorem for
1-jets (going from this to the global statement is a straightforward induction
on simplices of a polyhedron).

\begin{cor}[{\cite{Eliashberg_holonomic_paper}}]
  \label{thm:EM_intro}
  Let $\sigma$ be a section of $J^1(\mathbb{R}^m \times \mathbb{R} \times \mathbb{R}^k, \mathbb{R}^n)$ defined near $A$. For every $\epsilon > 0$, there exist a solution $(\delta, f_1)$ of the holonomic $\epsilon$-approximation problem for $\sigma$ near $A$. This holds in relative form and parametrically.
\end{cor}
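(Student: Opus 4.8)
The plan is to deduce the corollary from the Main theorem: I will exhibit a concrete formal solution of $\Rel_{ha}(\sigma,\epsilon)$, feed it to off-the-shelf convex integration using the second part of the theorem, and then promote the resulting holonomic solution to a genuine approximation using the first part. Since a solution of the holonomic $\epsilon'$-approximation problem is a fortiori a solution of the $\epsilon$-approximation problem for every $\epsilon \ge \epsilon'$, I may assume $\epsilon$ is small enough that $f(x,y,0)$ and $\varphi(x,y,0)$ are defined whenever $x$ is near $[0,1]^m$ and $|y| < \epsilon$; this is harmless.

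First I would write down the formal solution. Let $\iota \co \mathbb{R}^m \to \mathbb{R}^m \times \mathbb{R} \times \mathbb{R}^k$ denote the inclusion $v \mapsto (v,0,0)$ and consider the section $F_0$ of $J^1(\mathbb{R}^m, \mathbb{R} \times \mathbb{R}^n)$ defined near $[0,1]^m$ by
\[
  F_0(x) = \bigl(x,\, (0,\, f(x,0,0)),\, (0,\, \varphi(x,0,0)\circ\iota)\bigr).
\]
Here the $y$-component and the $Y$-component both vanish, so $\Gamma_Y = \mathbb{R}^m \times \{0\}$ and $\Gamma_Y \times \{0\}$ is the canonical copy of $\mathbb{R}^m$ on which $p_m$ restricts to the identity. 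The three quantities controlled in $\Rel_{ha}(\sigma,\epsilon)$ then all vanish identically: $|y| = 0$, $\|w - f(x,0,0)\| = 0$, and $W \circ p_m$ agrees with $\varphi(x,0,0)$ on $\Gamma_Y \times \{0\}$. Hence $F_0$ is a formal solution taking values well inside $\Rel_{ha}(\sigma,\epsilon)$.

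Next I would invoke convex integration as a black box. By the second part of the Main theorem $\Rel_{ha}(\sigma,\epsilon)$ is open and ample, so the $h$-principle for open ample relations (convex integration, as packaged in \cite{Eliashberg_Mishachev, these_melanie}) applies: the formal solution $F_0$ is homotopic, through formal solutions, to a holonomic one $j^1(\delta,h)$, where $(\delta,h)\co \mathbb{R}^m \to \mathbb{R} \times \mathbb{R}^n$ is a genuine map defined near $[0,1]^m$ whose $1$-jet takes values in $\Rel_{ha}(\sigma,\epsilon)$. In other words $(\delta,h)$ is a solution of $\Rel_{ha}(\sigma,\epsilon)$; note that the constraint $|y| < \epsilon$ built into the relation forces $\|\delta\|_0 < \epsilon$ automatically, so no separate density statement is needed for $\delta$. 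Applying the first part of the Main theorem to this $(\delta,h)$ produces the extension $f_1$ near $A_\delta$ for which $(\delta,f_1)$ solves the holonomic $\epsilon$-approximation problem, which is exactly the assertion of the corollary.

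Finally, the relative and parametric refinements come for free from the corresponding refinements of the black box. Convex integration for open ample relations holds parametrically and relative to a closed subset on which the formal solution is already holonomic, so I only need to check that $F_0$ respects these features. It is manifestly smooth in $\sigma$, hence in any parameters; and a direct computation with the chain rule shows that wherever $\sigma$ is already holonomic, say $\sigma = j^1 g$, the section $F_0$ is itself holonomic, being the $1$-jet of $x \mapsto (0, g(x,0,0))$. Thus the relative $h$-principle leaves $(\delta,h)$ untouched there, and the explicit extension of the first part of the theorem preserves both the parametric dependence and the relative condition. The genuinely substantial work having been carried out in the Main theorem, the only mild obstacle here is this bookkeeping: confirming that the formal solution and the extension interact correctly with the relative and parametric versions of convex integration.
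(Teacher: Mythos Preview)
Your proof is correct and follows the same approach as the paper: exhibit the formal solution $x \mapsto \bigl((0, f(x,0,0)), (0, \varphi(x,0,0))\bigr)$, invoke convex integration for the open ample relation $\Rel_{ha}(\sigma,\epsilon)$, and then use the first part of the Main theorem to extend. Your write-up is in fact more detailed than the paper's own proof, which compresses the extension step and the relative/parametric bookkeeping into the single phrase ``convex integration finishes the proof''; your explicit check that $F_0$ is holonomic wherever $\sigma$ is, and your remark that the $|y|<\epsilon$ clause automatically controls $\|\delta\|_0$, are useful clarifications that the paper leaves implicit.
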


\begin{proof}
  The relation $\Rel_{ha}(\sigma, \epsilon)$ admits a formal solution
  $x \mapsto \big((0, f(x, 0, 0)), (0, \varphi(x, 0, 0)\big)$.
  Since $\Rel_{ha}(\sigma, \epsilon)$ is open and ample, convex integration finishes the proof
  (see Section~\ref{sec:ample_relations_and_convex_integration} for information
  about ampleness and convex integration).
\end{proof}

As a last general remark, note that the holonomic approximation theorem also
holds for higher order jet spaces, although this more general version has
comparatively very few applications.
It seems that usual convex integration methods cannot directly prove
this version because they work one derivation order at a time. In our situation,
one could start with a section $\sigma$ of $J^2(M, N)$ which is holonomic up to order
one and get a relation for $(\delta, h)$ admitting a formal solution which is also
holonomic up to order one with vanishing $\delta$. Convex integration would then try
to produce a solution with very small first order derivative for $\delta$, which is
hopeless. However this does not prevent existence of some variation on the
convex integration idea that could work in that setup.

\subsection*{The mountain path example}

It can be helpful to see how the classical example of mountain paths looks like here. In that case $m = 1$, $k = 0$ and $n = 1$. Since $k = 0$ there is no $z$
coordinate in this case.

Here we consider the section $\sigma$ given by $f : (x, y) \mapsto x$ and $\varphi : (x, y) \mapsto 0$,
in other words we want to walk up on a mountain path with almost zero slope. The
map $\gamma \co x \mapsto (x, \delta(x), h(x))$ then parametrizes the core of the desired
mountain path. In that case $\Gamma_Y \times \{0\}$ is spanned by $\partial_x + Y(\partial_x)\partial_y$
which becomes $\partial_x + \delta'(x)\partial_y$ when $Y = d\delta(x)$.
The constraints on $(\delta, h)$ given by the main theorem are then, for all
$x \in [0, 1]$,

\[
  \begin{cases}
      |\delta(x)| < \epsilon \\
      |h(x) - x| < \epsilon \\
      |h'(x)| < \epsilon\sqrt{1 + \delta'(x)^2}
  \end{cases}
\]

Note the last line has a very clear geometric meaning. Squaring both sides of
the inequality, we see that the derivative $(1, \delta', h')$ of $\gamma$ must be in the
cone defined by $W^2 < \epsilon^2(X^2 + Y^2)$, and this condition is indeed an $\epsilon$-relaxed
version of being in the horizontal plane $W = 0$ specified by $\varphi$.
Here the ampleness condition means that the convex hull
of $\Omega := \{(Y, W) \in \mathbb{R}^2 \,|\, W^2 < \epsilon^2(1+Y^2)\}$ is the whole plane.
This $\Omega$ is the connected component of the complement of a hyperbola which
contains the asymptotes (this hyperbola is the intersection of the cone
$W^2 = \epsilon^2(X^2 + Y^2)$ with the affine plane $X = 1$). So the convex hull assumption
is indeed satisfied.

Specifically, the implementation of convex integration from \cite{these_melanie}
uses, for each $x$, a loop $\gamma : \mathbb{S}^1 \to \Omega$ based at the formal derivative
$(0, \varphi(x, 0)) = (0, 0)$, taking values in $\Omega$ and with average value the derivative
$(0, 1)$ of the zeroth order part of the formal solution: $x \mapsto (0, x)$.
For instance we can choose the loop $t \mapsto \big(4\sin(2\pi t)/\epsilon, 2\sin^2(2\pi t)\big)$.
Convex integration then produces
\[
  \delta : x \mapsto \frac{2(1 - \cos(2\pi N\,x))}{\epsilon\pi N}, \quad
  h : x \mapsto x - \frac1{4\pi N}\sin(4\pi Nx).
\]
where $N$ is a positive number that should be chosen large enough. Looking at
the constraints on $\delta$ and $h$, we deduce that $N \geqslant 4/(\pi\epsilon^2)$ is large
enough (provided $\epsilon < 1$).

Together, those functions parametrize the core of the mountain path, \emph{ie} the graph of the holonomic approximation restricted to the deformed submanifold $A_\delta$. As expected (but not plugged in the construction!), the deformed submanifold oscillates and the value $h$ goes up except at the turning point where $\delta'(x) = 0$ (note how the frequency of $h$ is twice the frequency of $\delta$).

Extending this solution from the deformed submanifold to a neighborhood can also
be done explicitly (see \cref{rem:explicite}), leading to a fully explicit
solution pictured in \cref{fig:mountain} (still using the above explicit expression for $\delta$ and $h$):
\[
  f_1 : (x, y) \mapsto h(x) +
  4\epsilon\,(y - \delta(x))\frac{(1-\cos(4\pi N x))\sin(2\pi Nx)}{\epsilon^2 + 16\sin^2(2\pi Nx)}.
\]

Besides its potential pedagogical value, the full explicitness of this example
shows that our proof is relevant from the point of view of $h$-principle
visualization, as pioneered by \cite{hevea_tore_plat}.

\begin{figure}[!ht]
\centering
\includegraphics[width=0.4\textwidth]{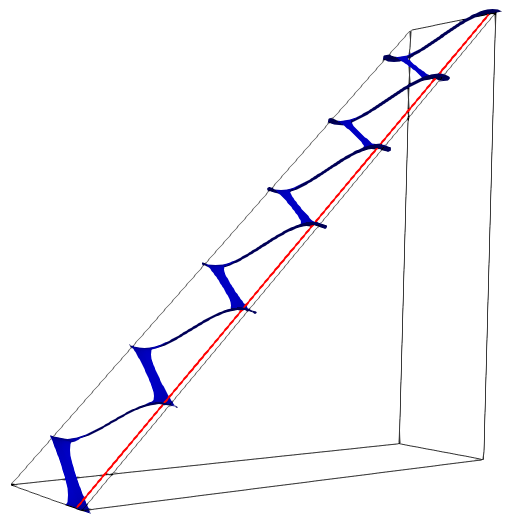}
\caption{The straight line in red is the starting path $f(x,y)=x$ walking up the mountain. The narrow surface in blue is the extended solution $f_1$ for $N=6$ and $\epsilon=1$.}
\label{fig:mountain}
\end{figure}

\paragraph{Outline}

\Cref{sec:ample_relations_and_convex_integration} is a purely expository
section where we recall the definition of ample differential relations and
their very strong $h$-principle.
\Cref{sec:holonomic_approximation} then proves the main theorem. The two parts
of that statement are proved completely independently. First,
Section~\ref{sub:the_extension_problem} explains the easy extension part. Then
Section~\ref{sub:ampleness_for_holonomic_approximation} is the heart of this
paper, proving ampleness of the relation $\Rel_{ha}(\sigma, \epsilon)$
from \cref{def:rel_ha}.

\paragraph{Acknowledgment}

The first author discussed the dream of proving the holonomic approximation
theorem through convex integration with Yakov Eliashberg during Jean Cerf's
90th birthday conference in 2018. The fact that Eliashberg didn't dismissed this
idea as crazy was of course supremely encouraging.
We also thank Vincent Borrelli for useful suggestions about the exposition in
this paper.

\section{Ample relations and convex integration}%
\label{sec:ample_relations_and_convex_integration}

This section recalls the definition of ample relations and Gromov's flexibility
theorem for open and ample differential relations, in particular setting up
notations we will need in the next section.

\begin{definition}[{Gromov in \cite[Section~2.4.C]{Gromov_PDR}, see also \cite[Sections~18.1 and~18.3]{Eliashberg_Mishachev}}]
  \label{def:principal}
  Let $M$ and $N$ be manifolds. For every hyperplane $\tau \subset T_mM$ for some $m$ and every linear map $L \co \tau \to T_nN$ for some $n$, we denote by $P_{\tau, L}$ the principal affine subspace of linear maps $\varphi \co T_mM \to T_nN$ whose restriction to $\tau$ agrees with $L$.
\end{definition}

Each principal subspace $P_{\tau, L}$ naturally lives inside some $\Hom(T_mM, T_nN)$ and can be seen as a subset of $J^1(M, N)$.
	Choosing some $v \in T_mM \setminus \tau$ allows to see $P_{\tau, L}$ more conveniently in $T_nN$. Indeed the map sending $\varphi \in P_{\tau, L}$ to $\varphi(v)$ is then
an affine isomorphism. To describe its inverse, consider the unique
linear form $\pi \in T^*_mM$ such that $\ker \pi = \tau$ and $\pi(v) = 1$, and consider the
projection $p$ of $T_mM$ onto $\tau$ in the decomposition $T_mM = \tau \oplus \Span(v)$.
The inverse of the above isomorphism can be expressed as $w \mapsto L \circ p + w \otimes \pi$.

Let $\Rel$ be a first order partial differential relation
for maps from $M$ to $N$, ie a subset of $J^1(M, N)$.
For every $\tau$ and $L$ as above, we get a slice $\Rel \cap P_{\tau, L}$.

\begin{definition}[Gromov in \cite{Gromov_73}]
  \label{def:ample}
  A subset $\Omega$ of a real affine space is \textit{ample} if the convex
  hull of each connected component of $\Omega$ is the whole space.
  A first order partial differential relation $\Rel$ is ample if,
  for each principal subspace $P_{\tau, L}$, the slice
  $\Rel \cap P_{\tau, L}$ is ample.
\end{definition}

Note that, for every affine space, the empty subset is ample since it has no
connected component.
The above definition is stated in the context of maps between manifolds but,
with a bit more care, it can be extended to the context of sections of bundles
and to higher order jets.

The most famous example of an ample relation is the
immersion relation in positive codimension. The corresponding slices are
complements of linear subspaces having codimension at least 2. Those slices are
obviously ample. In the present paper, ampleness will be less obvious.
Open and ample relations are somehow the most flexible of all partial
differential relations. They satisfy the strongest forms of the $h$-principle
without any condition on the topology of the source or target manifolds.

\begin{theorem}[Gromov, in \cite{Gromov_73, Gromov_PDR}]
  \label{thm:gromov_ample}
  Open and ample partial differential relations of order one satisfy all flavors
  of the $h$-principle: with parameters, relative and $C^0$-close.
\end{theorem}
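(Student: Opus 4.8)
The plan is to prove this by \emph{convex integration}, reducing the multi-dimensional, $C^0$-close, relative and parametric statement to a single one-dimensional analytic lemma, and then feeding ampleness into that lemma through the principal subspaces of \cref{def:principal}. First I would reduce to a local model. Because openness and ampleness are fiberwise conditions while the conclusion is $C^0$-close and relative, an induction over charts (with a partition of unity) lets me assume $M = [0,1]^m$, $N = \mathbb{R}^q$, and $\Rel$ is an open ample subset of the trivial bundle $[0,1]^m \times \mathbb{R}^q \times \Hom(\mathbb{R}^m, \mathbb{R}^q)$; patching the local homotopies back together again uses only $C^0$-closeness and the relative form. The parametric version is obtained by carrying a compact parameter along as extra source coordinates in which nothing is solved, and the relative version by keeping every construction supported away from the closed set where $\sigma$ is already holonomic, so that there the loops below degenerate to points and the correction vanishes.

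The heart is the one-dimensional lemma: given an open set $\Omega \subset \mathbb{R}^q$ and a $C^1$ base map $g \co [0,1] \to \mathbb{R}^q$ whose velocity $g'(t)$ lies in a connected component of $\Omega$ for every $t$, one builds a $C^0$-close map whose velocity still lands in $\Omega$ but now traces a prescribed oscillation. Here ampleness is exactly what is needed: since the convex hull of that connected component is all of $\mathbb{R}^q$, the prescribed average velocity is an interior point of the hull, hence can be written as the average $\bar\gamma_t = \int_0^1 \gamma_t$ of a loop $\gamma_t \co \mathbb{R}/\mathbb{Z} \to \Omega$ valued in the component, chosen continuously in $t$ and in all parameters. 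One then sets, for a large integer $N$,
\[
  G(t) = g(0) + \int_0^t \gamma_s(Ns)\, ds ,
\]
so that $G'(t) = \gamma_t(Nt) \in \Omega$ while $G(t) - g(t) = \int_0^t \bigl(\gamma_s(Ns) - \bar\gamma_s\bigr)\, ds = O(1/N)$, the integrand having zero average in the fast variable. This yields velocity in $\Omega$ together with $C^0$-closeness, and interpolating the loop amplitude from the constant loop at $\bar\gamma_t$ to $\gamma_t$ produces the required homotopy through formal solutions.

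I would then globalize by inducting over the $m$ coordinate directions of the cube. Having made the section holonomic in directions $x_1, \dots, x_{i-1}$, I use the principal subspace $P_{\tau, L}$ with $\tau = \Span(\partial_1, \dots, \partial_{i-1}, \partial_{i+1}, \dots, \partial_m)$ and $L$ recording the derivatives already achieved there; under the affine isomorphism $\varphi \mapsto \varphi(\partial_i)$ of \cref{def:principal}, the slice $\Rel \cap P_{\tau, L}$ becomes an ample subset $\Omega \subset \mathbb{R}^q$, and I apply the one-dimensional lemma in the variable $x_i$ with the remaining coordinates as parameters. Membership in $\Rel$ is preserved because the new velocity lands in $\Omega \subset \Rel$ by construction while the correction is $C^0$-small, so openness absorbs the zeroth-order error.

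The main obstacle is the bookkeeping in this induction: each round modifies the \emph{map}, hence a priori its derivatives in the already-treated directions, threatening the holonomy gained earlier and possibly pushing the jet out of $\Rel$. The resolution, cleanest in the corrugation formulation of \cite{these_melanie}, is that the correction has amplitude $O(1/N)$, velocity of order one only in the direction $x_i$ (tracing the loop in $\Omega$), and velocity $O(1/N)$ in every other direction, since the slow $x$-dependence of $\gamma$ and $\bar\gamma$ inherits the $1/N$ factor. Hence, choosing $N$ enormous relative to the frequencies used in previous rounds, the derivatives in directions $x_1, \dots, x_{i-1}$ are disturbed only by $O(1/N)$, the earlier holonomy survives up to arbitrarily small error, and openness keeps the whole stacked construction inside $\Rel$. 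Making this simultaneous frequency-versus-error control precise, uniformly over the compact cube and the parameter space, is exactly the technical crux that the black-box theorem packages away.
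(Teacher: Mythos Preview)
The paper does not prove \cref{thm:gromov_ample}; it is stated as Gromov's theorem, cited from \cite{Gromov_73, Gromov_PDR}, and used as a black box throughout. What follows the statement in the paper is not a proof but two orienting remarks: first, that the theorem is purely local (so one may work in charts, exactly as you do); second, that parameters come for free via the pullback $\Rel_P = \Psi^{-1}(\Rel)$ along the restriction map $J^1(M\times P,N)\to J^1(M,N)$, which stays ample. Your reduction to a local model matches the first remark, and your treatment of parameters (``carrying a compact parameter along as extra source coordinates'') is essentially the second remark phrased operationally.

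Your sketch of the convex integration proof itself is broadly the standard one and is reasonable at the level of an outline, but note one slip in the one-dimensional lemma: you assume the base map $g$ already has $g'(t)\in\Omega$, which would make it a solution from the start. In the actual setup one has a formal solution $(f,\varphi)$ with $\varphi(t)\in\Omega$ but $f'(t)$ arbitrary; ampleness is used to place $f'(t)$ in the convex hull of the component of $\Omega$ containing $\varphi(t)$, so that the loop $\gamma_t$ can be chosen in $\Omega$, based at $\varphi(t)$, with average $f'(t)$. With that correction your integral formula gives $G'\in\Omega$ and $G$ $C^0$-close to $f$, and the rest of your induction-over-directions and frequency bookkeeping is the right shape. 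Since the paper offers no proof to compare against, there is nothing further to contrast.
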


A first important remark about this theorem is that it is a purely local result.
The general case obviously implies the case where the source and target manifolds
are open sets in affine spaces, since those are manifolds. Conversely,
because the $h$-principle obtained is both relative and $C^0$-close, this special
case implies the general case, working one local chart at a time.

A second remark is that parameters come for free. Say we are interested in
families of maps from $M$ to $N$ parametrized by a manifold $P$.
Denote by $\Psi$ the map from $J^1(M \times P, N)$ to $J^1(M, N)$ sending $(m, p, n, \psi)$ to
$(m, n, \psi \circ \iota_{m, p})$ where $\iota_{m, p} : T_mM \to T_mM \times T_pP$ sends $v$ to $(v, 0)$.
To any family of sections $F_p : m \mapsto (f_p(m), \varphi_{p, m})$ of $J^1(M, N)$, we
associate the section $\bar F$ of $J^1(M \times P, N)$ sending $(m, p)$ to
$\bar F(m, p) := (f_p(m), \varphi_{p, m} \oplus \partial f/\partial p(m, p))$. Then $F$ is a family of
formal solutions of some relation $\Rel \subset J^1(M, N)$ if and only if $\bar F$ is a
formal solution of $\Rel_P := \Psi^{-1}(\Rel)$. In addition $\bar F$ is holonomic
at $(m, p)$ if and only if $F_p$ is holonomic at $m$.
One can check that if $\Rel$ is ample then, for any parameter space $P$,
$\Rel_P$ is also ample. Hence one can completely ignore parameters
when proving \Cref{thm:gromov_ample}, without needing arguments such as
``handling parameters only complicate notations''.

\section{Holonomic approximation}%
\label{sec:holonomic_approximation}

\subsection{The extension problem}%
\label{sub:the_extension_problem}

This section is devoted to the following result which is the easier half of the main theorem.

\begin{proposition}
  \label{prop:extension}
  Let $\sigma = (f, \varphi)$ be a section of $J^1(\mathbb{R}^m \times \mathbb{R} \times \mathbb{R}^k , \mathbb{R}^n)$ defined near $A$ and
  let $\epsilon$ be a positive real number.
  Let $\delta : \mathbb{R}^m \to \mathbb{R}$ and $h : \mathbb{R}^m \to \mathbb{R}^n$ be smooth maps.
  The function $(x, \delta(x), 0) \mapsto h(x)$, defined on $A_\delta$, can be extended to a
  function $f_1$ defined near $A_\delta$ such that $(\delta, f_1)$ is a solution to the
  holonomic $\epsilon$-approximation problem for $\sigma$ near $A$ if and only if $(\delta, h)$
  is a solution of the partial differential relation $\Rel_{ha}(\sigma, \epsilon)$
  from \cref{def:rel_ha}.
\end{proposition}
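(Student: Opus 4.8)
The statement is an "if and only if", and the heart of it is really the "if" direction: given that $(\delta,h)$ solves $\Rel_{ha}(\sigma,\epsilon)$, construct the extension $f_1$ near $A_\delta$. The reverse direction should be nearly immediate by restriction. Let me think about what $\Rel_{ha}$ is saying geometrically. We parametrize $A_\delta$ by $x\mapsto (x,\delta(x),0)$, and on it we want the map $f_1$ to restrict to $h$, so $f_1(x,\delta(x),0)=h(x)$. The "zeroth order" conditions $|\delta(x)|<\epsilon$ and $\|h(x)-f(x,\delta(x),0)\|<\epsilon$ are exactly $\|\delta\|_0<\epsilon$ and the $C^0$-closeness of $f_1$ to $f$ along $A_\delta$. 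The first-order condition is more subtle: along $A_\delta$ we need $\|j^1f_1-\sigma\|_0<\epsilon$, which for the derivative part means $\|T_{(x,\delta(x),0)}f_1-\varphi(x,\delta(x),0)\|<\epsilon$ as a linear map on all of $T(\mathbb R^m\times\mathbb R\times\mathbb R^k)$ — but $h$ only prescribes the derivative of $f_1$ along the tangent direction $T_{(x,\delta(x),0)}A_\delta=\Gamma_Y\times\{0\}$ where $Y=d\delta(x)$. On that subspace, the derivative of $f_1$ is forced: by the chain rule $d(f_1\circ(x\mapsto(x,\delta(x),0)))=dh$, i.e. the restriction of $Tf_1$ to $\Gamma_Y\times\{0\}$ equals $W\circ p_m$ restricted there, where $W=dh(x)$. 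That is precisely the last line of $\Rel_{ha}$. The remaining components of $Tf_1$ — the derivatives in directions transverse to $A_\delta$ — are completely free, and we get to choose them to match $\varphi$.

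So the construction is: pick a complement to $\Gamma_Y\times\{0\}$ inside $\mathbb R^m\times\mathbb R\times\mathbb R^k$, for instance spanned by $\partial_y$ together with the $\mathbb R^k$-directions, and simply declare the transverse derivatives of $f_1$ at the point $(x,\delta(x),0)$ to equal the corresponding components of $\varphi(x,\delta(x),0)$. Concretely, I would define
\[
  f_1(x,y,z) = h(x) + \big(\varphi(x,\delta(x),0)\big)\big(0,\,y-\delta(x),\,z\big)
\]
— i.e. take the prescribed value $h(x)$ on $A_\delta$ and extend affinely in the normal variables $(y-\delta(x),z)$ with slope dictated by $\varphi$. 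This is smooth and defined on a whole neighborhood of $A_\delta$ (wherever $\sigma$ is defined). One then checks $f_1(x,\delta(x),0)=h(x)$, so the restriction is $h$ as required; that $\|f_1-f\|_0<\epsilon$ on $A_\delta$ reduces to $\|h(x)-f(x,\delta(x),0)\|<\epsilon$, which is the second condition of $\Rel_{ha}$; and that $\|\delta\|_0<\epsilon$ is the first condition. For the derivative: at a point $(x,\delta(x),0)$, compute $T_{(x,\delta(x),0)}f_1$. On the normal complement $\{0\}\times\mathbb R\times\mathbb R^k$ it is by construction the normal part of $\varphi(x,\delta(x),0)$, matching exactly. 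On $\Gamma_Y\times\{0\}$ (with $Y=d\delta(x)$), differentiating along $A_\delta$ gives the tangential derivative of $h$, so $T_{(x,\delta(x),0)}f_1|_{\Gamma_Y\times\{0\}}=(W\circ p_m)|_{\Gamma_Y\times\{0\}}$ with $W=dh(x)$; hence
\[
  \big\|(T_{(x,\delta(x),0)}f_1-\varphi(x,\delta(x),0))|_{\Gamma_Y\times\{0\}}\big\|
  = \big\|(W\circ p_m-\varphi(x,\delta(x),0))|_{\Gamma_Y\times\{0\}}\big\| < \epsilon
\]
by the third condition of $\Rel_{ha}$. The only point needing a small argument is that the full operator norm $\|T f_1-\varphi\|$ along $A_\delta$ is controlled by $\epsilon$ and not just its restriction to the two pieces: since the restriction to the normal complement vanishes identically, one can split $T_mM=(\Gamma_Y\times\{0\})\oplus(\{0\}\times\mathbb R\times\mathbb R^k)$ and bound the operator norm in terms of the norm on the tangential block alone (up to a harmless constant coming from the angle between the two blocks, which is controlled by $\|d\delta\|$; alternatively, and more cleanly, choose the norm on the source adapted to this splitting). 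For the converse direction, if such an $f_1$ exists, then restricting $j^1f_1$ to $A_\delta$ and using $f_1|_{A_\delta}=h$ forces, via the chain rule, that $(\delta,h)$ satisfies the three inequalities of $\Rel_{ha}$ — this is just reading the $C^0$-closeness of $j^1f_1$ to $\sigma$ at points of $A_\delta$ and restricting the derivative comparison to $T A_\delta=\Gamma_Y\times\{0\}$.

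The main obstacle — really the only place where care is needed — is the bookkeeping around norms of linear maps: making sure that "small on the tangent space of $A_\delta$ and exactly right on a transverse complement" implies "small as a map on the whole tangent space", uniformly and with the constants absorbed into the $<\epsilon$. This is where the choice, announced in the excerpt, to use the sup norm on the target $\mathbb R^n$ and the Euclidean norm on the source, together with working with an orthogonal-type splitting, pays off: one gets the estimate with no loss. Everything else is a direct chain-rule computation and unwinding of \cref{def:rel_ha}. I would also remark (matching \cref{rem:explicite}) that the formula for $f_1$ above is completely explicit, which is what makes the mountain-path picture in \cref{fig:mountain} computable.
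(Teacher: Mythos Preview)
Your overall strategy matches the paper's, and the ``only if'' direction is fine. The gap is exactly where you flagged it as ``the only place where care is needed'': your splitting $(\Gamma_Y\times\{0\})\oplus(\{0\}\times\mathbb R\times\mathbb R^k)$ is \emph{not} orthogonal when $Y=d\delta(x)\neq 0$, so the estimate does \emph{not} go through without loss. Concretely, with your $f_1(x,y,z)=h(x)+\varphi(x,\delta(x),0)(0,y-\delta(x),z)$ one computes at $p=(x,\delta(x),0)$ that
\[
  (df_1-\varphi)(p)(\xi,\eta,\zeta)=dh(x)\xi-\varphi(p)(\xi,d\delta(x)\xi,0),
\]
which depends only on $\xi$. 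The hypothesis from $\Rel_{ha}$ bounds this by $\epsilon\sqrt{\|\xi\|^2+(d\delta(x)\xi)^2}$, but you need a bound by $\epsilon\sqrt{\|\xi\|^2+\eta^2+\|\zeta\|^2}$; taking $\eta=\zeta=0$ with $d\delta(x)\xi\neq 0$ shows this fails. In the mountain-path example ($\varphi=0$) your formula gives $f_1(x,y)=h(x)$, whose operator norm of the derivative is $|h'(x)|$, while $\Rel_{ha}$ only guarantees $|h'(x)|<\epsilon\sqrt{1+\delta'(x)^2}$, typically $>\epsilon$. Your proposed cure of adapting the source norm to the splitting is not available: the norm is fixed (Euclidean) and the splitting varies with $x$.

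The paper repairs this by extending along the \emph{Euclidean normal bundle} of $A_\delta$: set $f_1((x,\delta(x),0)+v)=h(x)+\varphi(x,\delta(x),0)v$ for $v\in(T_pA_\delta)^\perp$. Then $df_1-\varphi$ vanishes on $(T_pA_\delta)^\perp$, and for $\bar u=u_T+v$ with $u_T\in T_pA_\delta$ one has $\|u_T\|\le\|\bar u\|$ by Pythagoras, so $\|(df_1-\varphi)\bar u\|=\|(df_1-\varphi)u_T\|<\epsilon\|u_T\|\le\epsilon\|\bar u\|$ with no loss. This is precisely what your phrase ``orthogonal-type splitting'' would buy, but your chosen complement $\{0\}\times\mathbb R\times\mathbb R^k$ is not that orthogonal complement. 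Incidentally, \cref{rem:explicite} does give a fully explicit formula, but it is not yours: it has an extra correction term $(y-\delta(x))g(x)$ computed so that $df_1=\varphi$ on $(TA_\delta)^\perp$ rather than on $\{0\}\times\mathbb R\times\mathbb R^k$.
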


The first relevant fact is that $T_{(x, \delta(x), 0)}A_\delta = \Gamma_{d\delta} \times \{0\}$.
This explains the appearance of $\Gamma_Y \times \{0\}$ in the relation and allows to
prove that $\Rel_{ha}(\sigma, \epsilon)$ expresses a necessary condition for the existence of
$f_1$.

In order to prove this condition is sufficient, we then need to extend a
function defined on $A_\delta \subset \mathbb{R}^m \times \mathbb{R} \times \mathbb{R}^k$.
The tangent space $H_1$ to the graph of the desired extension $f_1$ at each
$p = (x, y, z, f_1(x, y, z))$ must be close to the affine subspace $H_0$
going through $p$ with direction the graph of linear map $\varphi(x, y, z)$.  In this
extension problem we already know $H_1$ above $TA_\delta$ and the relation $\Rel_{ha}(\sigma, \epsilon)$
ensures this part is close to $H_0$. We then extend linearly on each fiber of the
Euclidean tubular neighborhood of $A_\delta$, making sure that $H_1$ equals $H_0$ above
$TA_\delta^\perp$. The proof below tells this story using more formulas.

\begin{proof}
  By definition, the pair $(\delta, h)$ is a solution of $\Rel_{ha}(\sigma, \epsilon)$ near $A$ if, for
  all $x$ near $[0, 1]^m$, $\|h(x) - f(x, \delta(x), 0)\| < \epsilon$, $|\delta(x)| < \epsilon$ and, for
  every non-zero $\bar u$ in $\Gamma_{d\delta(x)} \times \{0\}$,
  $\big\|(dh(x) \circ p_m - \varphi(x, \delta(x), 0)) \bar u\big\| < \epsilon\|\bar u\|$.

  In particular the zeroth-order part of the definition of the relation $\Rel_{ha}(\sigma, \epsilon)$
  simply expresses that $h$ should be $\epsilon$-close to $x \mapsto f(x, \delta(x), 0)$ and
  $\delta$ should be $\epsilon$-small.

  Let  $f_1$ be an extension of $(x, \delta(x), 0) \mapsto h(x)$ near $A_\delta$.
  In particular $\rst{f_1}{A_\delta} = h \circ \rst{p_m}{A_\delta}$. Differentiating this relation
  gives
  \[
    \rst{df_1(x, \delta(x), 0)}{T_{(x, \delta(x), 0)}A_\delta} = dh(x) \circ \rst{p_m}{T_{(x, \delta(x), 0)}A_\delta}.
  \]
  Since $TA_\delta = \Gamma_{d\delta} \times \{0\}$, for any such extension $f_1$,
  the first order part of $\Rel_{ha}(\sigma, \epsilon)$
  expresses that $df_1$ is $\epsilon$-close to $\varphi$ on each $T_{(x, \delta(x), 0)}A_\delta$.

  This proves that if $(\delta, f_1)$ is a holonomic $\epsilon$-approximation of $\sigma$ near $A$
  then $(\delta, h)$ is a solution of $\Rel_{ha}(\sigma, \epsilon)$.
  Conversely, suppose $(\delta, h)$ is a solution. We want a extension $f_1$
  of $(x, \delta(x), 0) \mapsto h(x)$ near $A_\delta$ such that $(\delta, f_1)$ is a
  solution to the holonomic $\epsilon$-approximation problem for $\sigma$ near $A$.
  Let $\nu$ be the normal bundle of $A_\delta$ for the Euclidean metric on $\mathbb{R}^m \times \mathbb{R} \times \mathbb{R}^k$.
  The Euclidean exponential map $(m, v) \mapsto m + v$ is a diffeomorphism from a
  neighborhood of the zero section in $\nu$ onto a neighborhood of $A_\delta$.
  Using it, we can extend any map defined on $A_\delta$ while prescribing, at each
  point of $A_\delta$, the derivative on $(TA_\delta)^\perp$.
  Specifically, we extend $(x, \delta(x), 0) \mapsto h(x)$ by setting
  $$f_1((x, \delta(x), 0) + v) = h(x) + \varphi(x, \delta(x), 0)v.$$
  We need to prove that $df_1$ and $\varphi$ are $\epsilon$-close near $A_\delta$. Since
  $A_\delta$ is compact and the condition is open, it is enough to prove this at
  every point of $A_\delta$.
  We fix such a point $p$.
  Let $\bar u$ be any non-zero vector tangent to $\mathbb{R}^m \times \mathbb{R} \times \mathbb{R}^k$ at $p$.
  We can decompose it as $\bar u = u_T + v$ with $u_T$ in $T_pA_\delta$ and
  $v$ in $(T_pA_\delta)^\perp$.
  We have $(df_1 - \varphi) \bar u = (df_1 - \varphi) u_T$ since
  $df_1 = \varphi$ on $(T_pA_\delta)^\perp$ by construction.
  If $u_T = 0$ then $\|(df_1 - \varphi) u_T\| = 0 < \epsilon\|\bar u\|$.
  If $u_T \neq 0$ then, since $(\delta, h)$ is a solution of $\Rel_{ha}(\sigma, \epsilon)$,
  $\|(df_1 - \varphi) u_T\| < \epsilon\|u_T\| \leqslant \epsilon\|\bar u\|$
  where the last inequality follows from Pythagoras theorem since
  $v$ is perpendicular to $u_T$.
\end{proof}

\begin{remark}\label{rem:explicite}
  The above proof extends $(x, \delta(x), 0) \mapsto h(x)$ to a function that is linear on
  fibers of the Euclidean tubular neighborhood of $A_\delta$. This is geometrically
  very natural but it does not lead to fully explicit formulas, even when $\delta$
  and $h$ are completely explicit, because the inverse of the normal exponential
  map is not computable in general. However one can write a fully explicit map
  that has the same derivative at each point of $A_\delta$ starting with the ansatz
  $(x, y, z) \mapsto h(x) + (y - \delta(x))g(x) + \varphi(x, y, z) \circ p$, where $p$
  is the projection of $\mathbb{R}^m \times \mathbb{R} \times \mathbb{R}^k$ onto the $\mathbb{R}^k$ factor and $g$ is some
  unknown function. Using that $TA_\delta^\perp = \Span(\nabla\delta - \partial_y) \oplus \mathbb{R}^k$,
  a quick computation of the differential of the previous ansatz reveals that
  \[
    g(x) = \frac1{1 + \|\nabla\delta(x)\|^2}\big(dh(x)\nabla\delta(x) - \varphi(x, \delta(x), 0)(\nabla\delta(x) - \partial_y)\big)
  \]
  is suitable.
\end{remark}

\subsection{Ampleness for holonomic approximation}%
\label{sub:ampleness_for_holonomic_approximation}

This section is devoted to the proof of the second half of the main theorem,
asserting that $\Rel_{ha}(\sigma, \epsilon)$ is open and ample for every $\sigma$ and $\epsilon$.
Openness is clear, so we need to understand the affine geometry of slices of
$\Rel_{ha}(\sigma, \epsilon)$. Note that, in contrast to the previous section,
this section contains no differential calculus, only elementary affine geometry
and bilinear algebra. We fix $\sigma$ and $\epsilon$ and omit them from the
notation $\Rel_{ha}$.

\Cref{lem:ample_coordinates} below is almost purely setting up notations hiding
irrelevant details. But it also features a very simple affine parametrization.
Since affine transformations preserve the ampleness condition, we will be able
to work using only this parametrization.

\begin{lemma}
  \label{lem:ample_coordinates}
  Let $P$ be a principal affine subspace in $J^1(\mathbb{R}^m, \mathbb{R} \times \mathbb{R}^n)$.
  If the slice $\Rel_{ha} \cap P$ is not empty then there is
  a linear form $\lambda$ on $\mathbb{R}^{m-1}$, a linear map $\psi \in \Hom(\mathbb{R}^{m-1}, \mathbb{R}^n)$ and
  an affine isomorphism from $P$ to $\mathbb{R} \times \mathbb{R}^n$ which sends $\Rel_{ha} \cap P$ to
  \[
    \Omega_{\lambda, \psi, \epsilon} = \left\{(a, b) \in \mathbb{R} \times \mathbb{R}^n \;\left|\;
      \begin{aligned}
      &\forall (u, u') \in (\mathbb{R}^{m-1} \times \mathbb{R}) \setminus \{0\},\\
      &\big\|u'b + \psi u\big\|^2 < \epsilon^2\Big((u')^2 + \|u\|^2 + (au' + \lambda u)^2\Big)
      \end{aligned}
    \right.\right\}
  \]
\end{lemma}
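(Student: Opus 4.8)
The plan is to make the general setup of Definition~\ref{def:principal} completely concrete in our situation and then recognize the constraint defining $\Rel_{ha}$ on a principal subspace as precisely the stated quadratic inequality. First I would fix a principal subspace $P = P_{\tau, L}$, where $\tau \subset \mathbb{R}^m$ is a hyperplane and $L : \tau \to \mathbb{R} \times \mathbb{R}^n$ is linear; an element of $P$ is a pair $(Y, W) \in \Hom(\mathbb{R}^m, \mathbb{R}) \times \Hom(\mathbb{R}^m, \mathbb{R}^n)$ whose restriction to $\tau$ equals $L$. Pick once and for all a vector $v \in \mathbb{R}^m \setminus \tau$; as recalled just after Definition~\ref{def:principal}, evaluation at $v$, namely $(Y, W) \mapsto (Y(v), W(v)) \in \mathbb{R} \times \mathbb{R}^n$, is an affine isomorphism $P \to \mathbb{R} \times \mathbb{R}^n$. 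Write $(a, b)$ for the image pair; the inverse sends $(a,b)$ to $(L_Y \circ p + a\,\pi,\; L_W \circ p + b \otimes \pi)$, where $L = (L_Y, L_W)$, $\pi$ is the linear form with $\ker\pi = \tau$ and $\pi(v) = 1$, and $p$ is the projection of $\mathbb{R}^m$ onto $\tau$ along $v$. Also choose an orthonormal-type identification $\tau \cong \mathbb{R}^{m-1}$. Since $x$ and $(y,w)$ are unconstrained by $P$ — a principal subspace only constrains the linear part — I would carry them along and note the zeroth-order conditions $|y| < \epsilon$, $\|w - f(x,y,0)\| < \epsilon$ hold on an open set independent of $(a,b)$; after translating so that the corresponding center point of the slice becomes the origin, they drop out of the analysis of ampleness of the $(a,b)$-slice, so I can focus on the first-order inequality.

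Next I would unwind the first-order condition. The constraint is $\big\|(W\circ p_m - \varphi(x,y,0))\bar u\big\| < \epsilon\|\bar u\|$ for every nonzero $\bar u \in \Gamma_Y \times \{0\} \subset \mathbb{R}^m \times \mathbb{R} \times \mathbb{R}^k$. A vector of $\Gamma_Y$ has the form $(u_0, Y(u_0))$ with $u_0 \in \mathbb{R}^m$, so $\bar u = (u_0, Y(u_0), 0)$ and $p_m\bar u = u_0$, while $\|\bar u\|^2 = \|u_0\|^2 + Y(u_0)^2$ (the $\mathbb{R}^k$ part being zero). Now decompose $u_0 = u + u' v$ with $u \in \tau$ and $u' \in \mathbb{R}$; then $p(u_0) = u$ and, using that $(Y, W) = (L_Y \circ p + a\pi, L_W \circ p + b\otimes\pi)$, one computes $W(u_0) = L_W(u) + u' b$ and $Y(u_0) = L_Y(u) + a u'$. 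Setting $\psi := L_W - \varphi(x,y,0)|_\tau \in \Hom(\tau, \mathbb{R}^n) \cong \Hom(\mathbb{R}^{m-1}, \mathbb{R}^n)$ — after absorbing the $v$-component of $\varphi(x,y,0)$ into the affine shift of $b$, which is exactly the translation normalizing the slice — the left-hand side becomes $\|u' b + \psi u\|$, and setting $\lambda := L_Y|_\tau \in (\mathbb{R}^{m-1})^*$ the squared norm $\|\bar u\|^2$ becomes $\|u\|^2 + (u')^2 + (\lambda u + a u')^2$. Letting $u_0$ range over $\mathbb{R}^m \setminus \{0\}$ is the same as letting $(u, u') \in (\mathbb{R}^{m-1}\times\mathbb{R})\setminus\{0\}$, which yields exactly $\Omega_{\lambda,\psi,\epsilon}$.

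Finally I would assemble these: the affine isomorphism is evaluation at $v$ composed with the translation that moves the chosen base point of the nonempty slice to the origin (this is where nonemptiness is used, and where the $v$-components of $f(x,y,0)$ and $\varphi(x,y,0)$ and the zeroth-order data disappear), and since compositions of affine isomorphisms are affine isomorphisms this is legitimate. I would remark that the choice of $v$ and of the identification $\tau \cong \mathbb{R}^{m-1}$ only affect $\lambda$, $\psi$ by linear change of variables and so do not matter for the subsequent ampleness discussion — which is precisely the point of the lemma. I do not expect a genuine obstacle here; the only thing requiring care is bookkeeping the various affine shifts so that the zeroth-order part and the $\Span(v)$-contribution of $\varphi(x,y,0)$ are correctly absorbed into the translation, leaving a clean homogeneous-looking inequality in $(a,b)$. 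The substantive work — showing $\Omega_{\lambda,\psi,\epsilon}$ is ample — is deferred to the subsequent lemmas.
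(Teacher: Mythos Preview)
Your approach is exactly the paper's: parametrize $P$ by evaluation at a transversal $v$, unwind the first-order inequality along $\Gamma_Y\times\{0\}$, and absorb constants into an affine change of coordinates. There is, however, a genuine bookkeeping slip that you should fix.

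When you compute $(W\circ p_m-\varphi)\bar u$ with $\bar u=(u_0,Y(u_0),0)$, the map $\varphi=\varphi(x,y,0)$ acts on all of $\mathbb{R}^m\times\mathbb{R}\times\mathbb{R}^k$, so $\varphi(\bar u)=\varphi(u_0)+Y(u_0)\,\varphi(\partial_y)$. You correctly use the $\partial_y$-component $Y(u_0)=\lambda u+au'$ in the norm $\|\bar u\|^2$, but you drop it on the left-hand side. Keeping it, the left-hand side becomes
\[
\big[W_0u-\varphi(u)-(\lambda u)\,\varphi(\partial_y)\big]+u'\big[c-\varphi(v)-a\,\varphi(\partial_y)\big],
\]
so the correct choices are $\psi u=L_W u-\varphi(u)-(\lambda u)\,\varphi(\partial_y)$ and $b=c-\varphi(v)-a\,\varphi(\partial_y)$. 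In particular your formula $\psi=L_W-\varphi|_\tau$ is missing the $(\lambda u)\,\varphi(\partial_y)$ term, and the coordinate change $(a,c)\mapsto(a,b)$ is a shear, not merely the translation you describe. Neither correction threatens the lemma (the corrected $\psi$ is still linear and the corrected map is still an affine isomorphism), but your stated formulas are wrong as written. A second minor point: for $\|u_0\|^2=\|u\|^2+(u')^2$ you need $v$ to be a unit vector orthogonal to $\tau$, not an arbitrary transversal; the paper makes this choice explicitly.
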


\begin{proof}
  By definition of principal affine subspaces,
  $P = \{(x, (y, w), \theta) | \theta_{|\tau} = L\}$ for some $x \in \mathbb{R}^m$, $(y, w) \in \mathbb{R} \times \mathbb{R}^n$,
  some hyperplane $\tau$ in $T_x \mathbb{R}^m = \mathbb{R}^m$ and some
  $L = (Y_0, W_0) \in \Hom(\tau, \mathbb{R} \times \mathbb{R}^n)$.
  Let $v$ be a unit vector orthogonal to $\tau$. Using suitable Euclidean
  coordinates, we can assume $\tau = \mathbb{R}^{m-1} \times \{0\} \subset \mathbb{R}^{m-1} \times \mathbb{R}$ and $v = (0, 1)$.
  In particular, the linear form $\pi$ such that $\ker \pi = \tau$ and $\pi(v) = 1$
  is simply $(u, u') \mapsto u'$.

  As explained after \cref{def:principal}, $v$ and $\pi$ allow to identify $P$
  with the target space $\mathbb{R} \times \mathbb{R}^n$. Namely we can write $(Y,W)$ in $P$ as
  $(u, u') \mapsto (Y_0u + au', W_0u + u'c)$ for some $(a, c)  \in \mathbb{R} \times \mathbb{R}^n$.
  Note also that the graph of $Y$ is the space of vectors
  $(u, u') + Y(u, u') \partial_y$ and that $W \circ p_m$ evaluated on such a vector is
  simply $W_0u + u'c$ since $p_m(\partial_y) = 0$.

  Assume the slice is not empty. In particular $x,y,w$ are fixed by $P$ and satisfy the conditions
  $\|w - f(x,y)\|< \epsilon$ and $|y|< \epsilon$.
  Then, using the above identification, the slice is the set of
  $(a, c) \in \mathbb{R} \times \mathbb{R}^n$ such that, for all non-zero $(u, u')$,
  \[
    \|W_0u + u'c - \varphi(x, y, 0)((u, u') + (Y_0u + au')\partial_y)\| < \epsilon\|(u, u') + (Y_0u + au')\partial_y\|.
  \]
  We square both sides of the equation and use Pythagoras' theorem to rewrite
  the right hand side as $\epsilon^2(\|u\|^2 + (u')^2 + (Y_0u + au')^2)$.
  The promised affine isomorphism sends $(a, c)$ to
  $(a, b)$ where $b = c - a\varphi(x, y, 0)\partial_y - \varphi(x, y, 0)v$.
  We set $\lambda = Y_0$ and define $\psi$ as $u \mapsto W_0u - \varphi(x, y, 0)u - Y_0u\, \varphi(x, y, 0)\partial_y$.
\end{proof}

As explained above, the second part of the main theorem follows from the fact
that each set $\Omega_{\lambda, \psi, \epsilon}$ appearing in the above lemma is ample.
This is the announced purely geometric problem. So we fix a linear form
$\lambda$ on $\mathbb{R}^{m-1}$ and a linear map $\psi$ from $\mathbb{R}^{m-1}$ to $\mathbb{R}^n$.

We note in passing that if $m = 1$ then the situation is already clear. In that
case $\mathbb{R}^{m-1} = \{0\}$ so that $\lambda$ and $\psi$ can only be zero, and checking the
condition for every non-zero $u'$ is equivalent to checking it for $u' = 1$
since the condition is homogeneous.
We then have
$\Omega_{0, 0, \epsilon} = \left\{(a, b) \in \mathbb{R} \times \mathbb{R}^n \;\left|\; \|b\|^2 - \epsilon^2a^2 < \epsilon^2 \right.\right\}$
which is ample.

The next lemma finishes the case where the target has dimension one since it
proves the slice in this case is again the interior of a hyperbola. In this
$n=1$ case, $\lambda$ and $\psi$ play more symmetric roles. This is one reason where we
use the letter $\mu$ instead of $\psi$ in the next lemma. A more serious reason is
the general case will reduced to this lemma applied $n$ times, once for each
component of $\psi$.

\begin{lemma}
  \label{lem:slice_n_eq_1}
  Let $\lambda$ and $\mu$ be linear forms on $\mathbb{R}^{m-1}$, and $\epsilon$ be a positive real number.
  If $\Omega_{\lambda, \mu, \epsilon}$ is not empty then there exist positive real numbers
  $\kappa$ and $\eta$ and a real number $m_0$ such that
  \[
    \Omega_{\lambda, \mu, \epsilon} = \big\{(a, b)\in \mathbb{R} \times \mathbb{R} \;|\; (b - m_0a)^2 - \kappa^2a^2 < \eta^2\big\}.
  \]
\end{lemma}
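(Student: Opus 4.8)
The plan is to analyze the defining inequality of $\Omega_{\lambda,\mu,\epsilon}$ by treating it as a statement about a quadratic form in the variables $(u,u')\in\mathbb{R}^{m-1}\times\mathbb{R}$. For fixed $(a,b)$, the condition ``$(a,b)\in\Omega_{\lambda,\mu,\epsilon}$'' says that for all nonzero $(u,u')$,
\[
  (u'b+\mu u)^2 < \epsilon^2\big((u')^2+\|u\|^2+(au'+\lambda u)^2\big),
\]
i.e.\ that a certain quadratic form $Q_{a,b}(u,u')$ on $\mathbb{R}^m$ is positive definite. So $\Omega_{\lambda,\mu,\epsilon}=\{(a,b)\mid Q_{a,b}\succ 0\}$. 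First I would write $Q_{a,b}$ explicitly as
\[
  Q_{a,b}(u,u') = \epsilon^2\|u\|^2 + \epsilon^2(au'+\lambda u)^2 + \epsilon^2(u')^2 - (u'b+\mu u)^2,
\]
and observe that the $\|u\|^2$-part together with the $(au'+\lambda u)^2$ and $(u'b+\mu u)^2$ terms only ever involve $u$ through the two linear forms $\lambda u$ and $\mu u$ (and the full Euclidean norm $\|u\|$). This suggests choosing an orthonormal basis of $\mathbb{R}^{m-1}$ adapted to $\mathrm{Span}(\lambda,\mu)^{\perp\perp}$: write $u=(u_1,u_2,u'')$ where $\lambda u$ and $\mu u$ depend only on $(u_1,u_2)$ and $\|u\|^2=u_1^2+u_2^2+\|u''\|^2$. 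The $u''$ variables only enter through the positive term $\epsilon^2\|u''\|^2$, so definiteness of $Q_{a,b}$ is equivalent to definiteness of its restriction to the $u''=0$ subspace. This reduces everything to a quadratic form in the three variables $(u_1,u_2,u')$, hence to a concrete $3\times 3$ symmetric matrix $M_{a,b}$ whose positive-definiteness (for $(u_1,u_2,u')$ ranging over $\mathbb{R}^3\setminus\{0\}$) cuts out $\Omega_{\lambda,\mu,\epsilon}$.

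Next I would make the further reduction that, since for $u'=0$ the form is $\epsilon^2(u_1^2+u_2^2)+\epsilon^2(\lambda u)^2-(\mu u)^2$, independent of $(a,b)$, nonemptiness of $\Omega_{\lambda,\mu,\epsilon}$ forces this $2\times 2$ form to be positive definite; call this standing hypothesis $(\star)$. Under $(\star)$ one can diagonalize the $(u_1,u_2)$-block and rescale so that effectively $u_1^2+u_2^2$ becomes the relevant quadratic term, and the problem becomes: for which $(a,b)$ is the $3\times 3$ matrix positive definite? By Sylvester's criterion (or by completing the square in $u'$, legitimate once we know the $(u_1,u_2)$-block is positive definite under $(\star)$), positive-definiteness of $M_{a,b}$ is equivalent to the positivity of its determinant, i.e.\ to an inequality of the shape $\det M_{a,b} > 0$. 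The point is that $\det M_{a,b}$ is a polynomial in $(a,b)$ that is quadratic: the $u'$-direction contributes linearly in $a$ and linearly in $b$ inside the off-diagonal and the diagonal entry $\epsilon^2(1+a^2)-b^2$, and expanding the $3\times 3$ determinant the top-degree terms in $(a,b)$ come from $\epsilon^2 a^2$ and $-b^2$ paired against the positive-definite $(u_1,u_2)$-minor. So $\det M_{a,b} > 0$ rearranges into $(b-m_0 a)^2 - \kappa^2 a^2 < \eta^2$ for suitable constants $m_0\in\mathbb{R}$, $\kappa,\eta>0$, provided the coefficient of $b^2$ is negative and that of $a^2$ positive — which is exactly what the structure ($-b^2$ against a positive minor, $+\epsilon^2 a^2$ against the same minor) guarantees, and nonemptiness guarantees $\eta^2>0$ and $\kappa^2>0$ rather than a degenerate sign.

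The main obstacle I expect is bookkeeping in the $3\times 3$ determinant computation and, more subtly, checking that the coefficients come out with the correct signs ($\kappa^2>0$, $\eta^2>0$, and the $b^2$-coefficient strictly negative so that the inequality really describes the inside of a hyperbola rather than an ellipse or a degenerate conic). The strict positivity of $\eta^2$ should follow directly from the assumed nonemptiness of $\Omega_{\lambda,\mu,\epsilon}$ (pick a point in it and read off $\det M > 0$ there). The sign of $\kappa^2$ should follow from examining the asymptotic regime $|a|\to\infty$ with $b/a$ fixed: for the form to ever be definite one needs the leading behavior in $a$ to be dominated by the $\epsilon^2 a^2$ term against the positive-definite transverse block, which forces $\kappa^2>0$; alternatively it can be read off from the sign of a principal minor after completing the square. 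I would also double-check the edge case where $\mathrm{Span}(\lambda,\mu)$ has dimension $0$ or $1$ (so the $(u_1,u_2)$ reduction collapses to one variable or none), confirming the formula still holds — in those cases some of the cross terms simply vanish and the conic degenerates to the expected hyperbola, matching the $m=1$ discussion already in the text.
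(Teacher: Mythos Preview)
Your approach is essentially the paper's: the paper also recognizes the defining condition as positive definiteness of a quadratic form in $(u,u')$, splits off the $u'=0$ case to get that $A=\Id+\lambda\otimes\lambda^\sharp-\mu\otimes\mu^\sharp$ is positive definite (your hypothesis $(\star)$), and then minimizes over $u$---which is exactly your Schur complement---to reduce to a single quadratic inequality in $(a,b)$. Your preliminary reduction to the span of $\lambda^\sharp,\mu^\sharp$ is harmless but not needed, since $A^{-1}$ handles the full $\mathbb{R}^{m-1}$ just as well.

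The genuine gap is precisely where you flagged ``the main obstacle'': establishing $\kappa^2>0$. Your asymptotic argument does not work as written. You say that ``for the form to ever be definite'' the $\epsilon^2a^2$ term must dominate, but nonemptiness of $\Omega$ gives you a single point, not points with $|a|\to\infty$; you would be assuming that $\Omega$ is unbounded in the $a$-direction, which is equivalent to what you are trying to prove (an ellipse is bounded, the interior of a hyperbola is not). Your fallback ``read it off from the sign of a principal minor after completing the square'' is not an argument either: after completing the square in $b$, the coefficient of $a^2$ is exactly the quantity whose sign is in question, and no principal minor of $M_{a,b}$ gives it directly.

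The paper closes this gap by brute force: after completing the square it names the coefficient $K$ and computes explicitly, via the action of $A$ and $A^{-1}$ on $\Span(\lambda^\sharp,\mu^\sharp)$, that $K=1/(1+\|\lambda\|^2)>0$. If you want a more structural version compatible with your plan, apply the rank-one criterion to $Q_{a,b}=\epsilon^2(I+L_aL_a^T)-M_bM_b^T$ with $L_a=(\lambda^\sharp,a)$, $M_b=(\mu^\sharp,b)$: positive definiteness is equivalent to $M_b^T\bigl(\epsilon^2(I+L_aL_a^T)\bigr)^{-1}M_b<1$, and Sherman--Morrison turns this into
\[
(1+\|\lambda\|^2)\Bigl(b-\tfrac{\lambda\cdot\mu}{1+\|\lambda\|^2}\,a\Bigr)^2-\tfrac{1}{1+\|\lambda\|^2}\Bigl[(\epsilon^2-\|\mu\|^2)(1+\|\lambda\|^2)+(\lambda\cdot\mu)^2\Bigr]a^2<\text{(same bracket)},
\]
so $\kappa^2=\eta^2/(1+\|\lambda\|^2)$ and nonemptiness ($\eta^2>0$) forces $\kappa^2>0$. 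Either way, some computation is unavoidable; your outline stops just short of it.
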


\begin{proof}
  First note that, for every $(a, b)$,
  $(a,b) \in \Omega_{\lambda, \mu, \epsilon} \Leftrightarrow (a,b/\epsilon)\in \Omega_{\lambda, \mu/\epsilon, 1}$.
  This observation allows to assume $\epsilon = 1$ without loss of generality.

  In this proof we will drop the subscripts in the notation
  $\Omega_{\lambda, \mu, 1}$.
  We denote by $\lambda^\sharp$ and $\mu^\sharp$ the vectors dual to $\lambda$ and $\mu$ for
  the Euclidean structure on $\mathbb{R}^{m-1}$.
  We set
  \[
    A = \Id + \lambda \otimes \lambda^\sharp - \mu \otimes \mu^\sharp \in \End(\mathbb{R}^{m-1}) \quad\text{and}\quad
    B(a, b) = b \mu^\sharp - a \lambda^\sharp \in \mathbb{R}^{m-1}.
  \]
  Note that $A$ is a symmetric endomorphism of $\mathbb{R}^{m-1}$ which does not depend on
  $(a, b)$.

  We assume that $\Omega$ is not empty.
  The condition defining $\Omega$ is homogeneous (of degree 2) in $(u, u')$ hence
  it is true for all $(u, u') \neq 0$ if and only if it is true for all
  $(u, 0)$ with $u \neq 0$ and for all $(u, 1)$.
  The first condition is:
  \[
     \forall u \in \mathbb{R}^{m-1} \setminus \{0\}, \big(\mu u\big)^2 < \|u\|^2 + (\lambda u)^2
  \]
  which means $A$ is positive definite (here we used that $\Omega$ is not empty).
  The second condition is
  \[
     \forall u \in \mathbb{R}^{m-1}, \big(b + \mu u\big)^2 < 1 + \|u\|^2 + (a + \lambda u)^2
  \]
  which can be expanded and, gathering terms by degree in $u$, rewritten as:
  \[
    b^2 - a^2 - 1 < \langle u, Au\rangle - 2\langle B, u\rangle.
  \]
  Since $A$ is positive definite, it is invertible and has a square root (which
  is also symmetric and invertible).
  So we can rewrite the right hand-side as
  $\|A^{1/2}u - A^{-1/2}B\|^2 - \|A^{-1/2}B\|^2$. This is bounded below by
  $-\|A^{-1/2}B\|^2 = -\langle B, A^{-1}B\rangle$, and this bound is attained (when $u = A^{-1}B$). Hence
  \[
    \Omega = \left\{ (a, b) \;|\; b^2 + \langle B(a, b), A^{-1}B(a, b)\rangle - a^2 < 1 \right\}
  \]
  We expand the above equation in powers of $a$ and $b$ to get:
  \[
    (1 + \langle\mu^\sharp, A^{-1}\mu^\sharp\rangle)b^2 -2ab\langle\mu^\sharp, A^{-1}\lambda^\sharp\rangle + (\langle\lambda^\sharp, A^{-1}\lambda^\sharp\rangle - 1)a^2 < 1.
  \]
  We set $N = (1 + \langle\mu^\sharp, A^{-1}\mu^\sharp\rangle)^{1/2}$, which is positive, and rewrite the
  equation as:
  \[
    \left(Nb - \frac{\langle\mu^\sharp, A^{-1}\lambda^\sharp\rangle}{N} a\right)^2 - K a^2 < 1
  \]
  where
  \[
    K = 1 + \frac{\langle\mu^\sharp, A^{-1}\lambda^\sharp\rangle^2}{N^2} - \langle\lambda^\sharp, A^{-1}\lambda^\sharp\rangle.
  \]
  It suffices to prove $K$ is positive and then divide everything by $N^2$ to
  find the announced description.
  We will prove that $K = 1/(1 + \|\lambda\|^2)$, but unfortunately the computation is
  not pleasant.

  Let first explain the degenerate case where $\lambda$ and $\mu$ are linearly dependent.
  If $\lambda = 0$ then the claim is clear.
  Otherwise we can write $\mu = k\lambda$ for some real number $k$, then $A$ simplifies to
  $\Id + (1-k^2)\lambda \otimes \lambda^\sharp$, so $A\lambda^\sharp = (1 + (1 - k^2)\|\lambda\|^2)\lambda^\sharp$, with
  $1 + (1 - k^2)\|\lambda\|^2 > 0$ since $A$ positive. This formula allows to compute
  $A^{-1}\lambda^\sharp$ and then compute $K = 1/(1 + \|\lambda\|^2)$.

  Now assume that $\lambda$ and $\mu$ are linearly independent. Note that, as $A = \Id +
  \lambda \otimes \lambda^\sharp - \mu \otimes \mu^\sharp$, the space $P=Span(\lambda^\sharp,\mu^\sharp)$ is stable. Specifically:
	\begin{equation*}
	A\lambda^\sharp = (1+\|\lambda\|^2)\lambda^\sharp - (\mu\lambda^\sharp)\mu^\sharp, \quad
	A\mu^\sharp = (\mu\lambda^\sharp)\lambda^\sharp + (1-\|\mu\|^2)\mu^\sharp
	\end{equation*}
	and, restricted to $P$, $\det A = (1+\|\lambda\|^2)(1-\|\mu\|^2)+ (\mu\lambda^\sharp)^2$. We now deduce
	\begin{equation*}
	A^{-1}\lambda^\sharp = \frac{1}{\det A}\big((1-\|\mu\|^2)\lambda^\sharp + (\mu\lambda^\sharp)\mu^\sharp \big), \quad
	A^{-1}\mu^\sharp = \frac{1}{\det A}\big( -(\mu\lambda^\sharp)\lambda^\sharp + (1+\|\lambda\|^2)\mu^\sharp \big)
	\end{equation*}
	so
	\begin{align*}
	\lambda A^{-1}\lambda^\sharp &= \frac{1}{\det A}\big((1-\|\mu\|^2)\|\lambda\|^2 + (\mu\lambda^\sharp)^2 \big) = 1 - \frac{(1-\|\mu\|^2)}{\det A},\\
	\mu A^{-1}\mu^\sharp &= \frac{1}{\det A}\big( -(\mu\lambda^\sharp)^2 + (1+\|\lambda\|^2)\|\mu\|^2 \big)= -1 + \frac{(1+\|\lambda\|^2)}{\det A}\\
	\lambda A^{-1}\mu^\sharp &= \frac{\mu\lambda^\sharp}{\det A}
	\end{align*}
  We then obtain again $K = 1/(1+\|\lambda\|^2)$ which is positive.
\end{proof}

We now return to the general case where the target dimension $n$ is any natural number.

\begin{lemma}
  \label{lem:connected_slice}
  Each $\Omega_{\lambda, \psi, \epsilon}$ is either empty or connected.
\end{lemma}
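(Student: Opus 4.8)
The plan is to prove the stronger assertion that a nonempty $\Omega_{\lambda,\psi,\epsilon}$ is star-shaped with respect to the origin of $\mathbb{R}\times\mathbb{R}^n$ (the empty case being covered trivially), since a star-shaped set is connected. Almost all of the work goes into rewriting the defining condition in a form where this is transparent; the final estimate is one line.

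First I would strip away the degenerate directions. The condition cutting out $\Omega_{\lambda,\psi,\epsilon}$ is homogeneous of degree two in $(u,u')$, so it is equivalent to imposing it only on the pairs $(u,0)$ with $u\neq 0$ and on the pairs $(u,1)$ with $u$ arbitrary. The pairs $(u,0)$ yield the inequalities $\|\psi u\|^2<\epsilon^2\big(\|u\|^2+(\lambda u)^2\big)$, which do not involve $(a,b)$; since $\Omega_{\lambda,\psi,\epsilon}$ is assumed nonempty they all hold, i.e. the quadratic form $u\mapsto \epsilon^2\big(\|u\|^2+(\lambda u)^2\big)-\|\psi u\|^2$ on $\mathbb{R}^{m-1}$ is positive definite. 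The pairs $(u,1)$ yield the inequalities $G(a,b,u)>0$ for all $u\in\mathbb{R}^{m-1}$, where
\[
  G(a,b,u) := \epsilon^2\big(1+\|u\|^2+(a+\lambda u)^2\big)-\|b+\psi u\|^2 .
\]

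Next I would minimise over $u$. For each $(a,b)$ the map $u\mapsto G(a,b,u)$ is a quadratic polynomial whose degree-two part is precisely the positive definite form found above, hence it attains its minimum, and $(a,b)\in\Omega_{\lambda,\psi,\epsilon}$ if and only if $g(a,b):=\min_u G(a,b,u)>0$. The structural observation that makes the argument work is that $G(a,b,u)=\epsilon^2+H(a,b,u)$, where
\[
  H(a,b,u) := \epsilon^2\|u\|^2+\epsilon^2(a+\lambda u)^2-\|b+\psi u\|^2
\]
is a genuine quadratic form in the joint variable $(a,b,u)$ --- each of its three summands manifestly is. Consequently $q(a,b):=\min_u H(a,b,u)$ is homogeneous of degree two in $(a,b)$ (substituting $u=\theta v$ in the minimisation handles $\theta\neq 0$, and positive definiteness gives $q(0,0)=0$), and $g=\epsilon^2+q$.

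The conclusion is then immediate. Let $(a,b)\in\Omega_{\lambda,\psi,\epsilon}$, i.e. $q(a,b)>-\epsilon^2$, and let $\theta\in[0,1]$. Then $q(\theta a,\theta b)=\theta^2 q(a,b)$, which is $\geq 0>-\epsilon^2$ if $q(a,b)\geq 0$ and is $\geq q(a,b)>-\epsilon^2$ if $q(a,b)<0$; either way $g(\theta a,\theta b)>0$, so $(\theta a,\theta b)\in\Omega_{\lambda,\psi,\epsilon}$. Thus $\Omega_{\lambda,\psi,\epsilon}$ is star-shaped about the origin, and in particular connected. The one step to watch is the reduction above: the nonemptiness hypothesis is used in an essential way, namely to guarantee that the minimum over $u$ is attained, so that ``$G>0$ for all $u$'' is the same as ``$\min_u G>0$'' and so that $q$ is well defined and two-homogeneous. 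Everything else is bookkeeping and the homogeneity estimate, so I do not expect a serious obstacle; the only mild danger is sloppiness in the case $u'=0$ versus $u'\neq 0$ split.
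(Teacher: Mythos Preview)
Your proof is correct and follows essentially the same approach as the paper: both show that a nonempty $\Omega_{\lambda,\psi,\epsilon}$ is star-shaped about the origin, using that the defining inequality is homogeneous of degree two in $(a,b,u)$ jointly. The only difference is packaging---you pass through the minimisation $q(a,b)=\min_u H(a,b,u)$ and invoke two-homogeneity of $q$, whereas the paper stays with the pointwise inequality and uses the substitution $u\mapsto u/t$ directly; these are the same rescaling trick viewed from two angles.
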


\begin{proof}
  We fix $(\lambda, \psi, \epsilon)$ and set $\Omega = \Omega_{\lambda, \psi, \epsilon}$. We assume $\Omega$ is not empty
  and will prove it is star-shaped with respect to the origin.

  In the definition of $\Omega$,
  we can specialize to $u' = 0$ to get that, for every $(a, b)$ in $\Omega$:
  \begin{equation}\label{eq:origin}
    \forall u \in \mathbb{R}^{m-1} \setminus \{0\},\; \|\psi u\|^2 < \epsilon^2(\|u\|^2 + (\lambda u)^2).
  \end{equation}
  This condition \eqref{eq:origin} does not depend on $(a, b)$. Since
  $\Omega$ is not empty, we learn that~\eqref{eq:origin} holds.

  We now prove that the origin is in $\Omega$. Fix some $(u, u') \in (\mathbb{R}^{m-1} \times \mathbb{R}) \setminus
  \{0\}$. If $u \neq 0$ then $\|\psi u\|^2 < \epsilon^2\big((u')^2 + \|u\|^2 + (\lambda u)^2\big)$ thanks
  to condition \eqref{eq:origin} and $(u')^2 \geqslant 0$. Otherwise $u' \neq 0$ and the
  condition to check reduces to $0 < \epsilon^2(u')^2$.

  Next, assuming $(a, b)$ is in $\Omega$ and $t$ is in $(0, 1]$, we need to prove
  that $t(a, b)$ is in $\Omega$. We fix some non-zero $(u, u')$ and compute
  \begin{align*}
    \big\|u'tb + \psi u\big\|^2 &= t^2\big\|u'b + \psi(u/t)\big\|^2 \\
                           &< t^2\epsilon^2\Big((u')^2 + \|u/t\|^2 + (au' + \lambda(u/t))^2\Big) \text{ since $(a, b) \in \Omega$} \\
                           &= \epsilon^2\Big(t^2(u')^2 + \|u\|^2 + (tau' + \lambda u)^2\Big) \\
                           &\leqslant \epsilon^2\Big((u')^2 + \|u\|^2 + (tau' + \lambda u)^2\Big) \text{ since $t \leqslant 1$.}
  \end{align*}
  Hence $(ta, tb)$ is in $\Omega$.
\end{proof}

As explained earlier, the next lemma finishes the proof of the main theorem.

\begin{lemma}
  \label{lem:ample_slice}
  Each $\Omega_{\lambda, \psi, \epsilon}$ is ample.
\end{lemma}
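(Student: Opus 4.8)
The plan is to rewrite $\Omega_{\lambda, \psi, \epsilon}$ as the sublevel set $\{q < 1\}$ of a single quadratic form $q$ on $\mathbb{R} \times \mathbb{R}^n$, and then deduce ampleness purely from the fact that $q$ is not positive semidefinite. First I would reduce to $\epsilon = 1$: the linear isomorphism $(a, b) \mapsto (a, b/\epsilon)$ identifies $\Omega_{\lambda, \psi, \epsilon}$ with $\Omega_{\lambda, \psi/\epsilon, 1}$ and preserves ampleness, exactly as at the start of the proof of \Cref{lem:slice_n_eq_1}. If $\Omega := \Omega_{\lambda, \psi, 1}$ is empty we are done, so assume it is not; then \eqref{eq:origin} holds, and this says precisely that the symmetric endomorphism $A = \Id + \lambda \otimes \lambda^\sharp - \sum_{j=1}^n \psi_j^\sharp \otimes \psi_j$ of $\mathbb{R}^{m-1}$ is positive definite, where we split $\psi = (\psi_1, \dots, \psi_n)$ into its scalar components and write $\psi_j^\sharp$ for the Euclidean dual of the linear form $\psi_j$. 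Restricting the defining condition to $u' = 1$ and $u' = 0$ by homogeneity, then expanding, gathering terms by degree in $u$ and completing the square with $A^{1/2}$ — the same manipulation as in \Cref{lem:slice_n_eq_1}, but carrying a vector $b$ instead of a scalar — yields
\[
  \Omega = \big\{ (a, b) \in \mathbb{R} \times \mathbb{R}^n \;\big|\; q(a, b) < 1 \big\},
  \qquad q(a, b) = \|b\|^2 - a^2 + \big\langle B(a, b), A^{-1} B(a, b) \big\rangle,
\]
with $B(a, b) = \sum_j b_j \psi_j^\sharp - a \lambda^\sharp$; note $q$ is a genuine (homogeneous degree two) quadratic form.

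Granting this, ampleness is soft. Suppose we know that $q(v_*) < 0$ for some $v_* \in \mathbb{R} \times \mathbb{R}^n$. Then for any $x$, writing $b_q$ for the polarization of $q$, $q(x \pm t v_*) = q(x) \pm 2 t\, b_q(x, v_*) + t^2 q(v_*)$ tends to $-\infty$ as $t \to +\infty$; so both $x + t v_*$ and $x - t v_*$ lie in $\Omega = \{q < 1\}$ once $t$ is large enough, and their midpoint is $x$. Hence $\operatorname{conv}(\Omega) = \mathbb{R} \times \mathbb{R}^n$. Since $\Omega$ is connected by \Cref{lem:connected_slice}, it is its own unique connected component, so this is exactly the ampleness of $\Omega_{\lambda, \psi, \epsilon}$.

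It remains to produce a negative direction of $q$, and this is where I expect the one genuine difficulty to lie. The $b$-block of $q$ is positive definite — indeed $q(0, b) = \|b\|^2 + \langle B(0, b), A^{-1} B(0, b)\rangle \geqslant \|b\|^2$ since $A^{-1}$ is positive definite — so minimizing $q(a, \cdot)$ over $b$ at fixed $a$ turns $q$ into a quadratic form in $a$ alone whose coefficient is the Schur complement of that block, namely $\langle \lambda^\sharp, A^{-1}\lambda^\sharp \rangle - 1 - \langle \psi A^{-1}\lambda^\sharp, S^{-1} \psi A^{-1}\lambda^\sharp \rangle$ with $S = \Id_n + \psi A^{-1}\psi^{*} \succ 0$. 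The claim — which generalizes the identity $K = 1/(1 + \|\lambda\|^2)$ from \Cref{lem:slice_n_eq_1} (the case $n = 1$) — is that this Schur complement equals $-1/(1 + \|\lambda\|^2)$, and is in particular negative, whatever $\psi$ is. The cleanest computation I see uses $A_0 := \Id + \lambda \otimes \lambda^\sharp$ (so that $A_0^{-1} = \Id - (1+\|\lambda\|^2)^{-1}\lambda^\sharp \otimes \lambda$), the Woodbury identity $S^{-1} = \Id_n - \psi A_0^{-1}\psi^{*}$ coming from $A = A_0 - \psi^{*}\psi$, and the relation $\psi^{*}\psi A^{-1}\lambda^\sharp = A^{-1}\lambda^\sharp + \big(\langle \lambda^\sharp, A^{-1}\lambda^\sharp\rangle - 1\big)\lambda^\sharp$ (again from $A = A_0 - \psi^{*}\psi$); after substitution everything collapses to $-1/(1+\|\lambda\|^2)$. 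Alternatively one can peel the rows $\psi_1, \dots, \psi_n$ off one at a time and apply the rank-one update computation behind \Cref{lem:slice_n_eq_1} $n$ times — "once for each component of $\psi$". Either way this bookkeeping is the whole content of the proof; everything else is formal.
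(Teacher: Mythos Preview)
Your computation tacitly uses the Euclidean norm on $\mathbb{R}^n$, whereas the paper has fixed the sup norm there (stated just after the holonomic $\epsilon$-approximation problem is introduced, and exploited explicitly in the paper's own proof of this lemma). With the sup norm, $\Omega_{\lambda,\psi,\epsilon}$ is \emph{not} the sublevel set of a single quadratic form: the defining inequality reads $\max_j(u'b_j+\psi_j u)^2<\cdots$, so $\Omega$ is an intersection of $n$ hyperbolic slabs, not the interior of one quadric. Concretely, your claim that \eqref{eq:origin} is equivalent to positive definiteness of $A=\Id+\lambda\otimes\lambda^\sharp-\sum_j\psi_j^\sharp\otimes\psi_j$ fails for the sup norm: take $m-1=1$, $n=2$, $\lambda=0$, $\psi_1=\psi_2=4/5$; then the sup-norm $\Omega$ is nonempty (each scalar $1-16/25>0$) but your $A=1-32/25<0$, so $A^{-1}$ does not exist and your description $\Omega=\{q<1\}$ is not even well-posed.

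That said, your argument is correct and rather clean for the Euclidean-norm variant of $\Omega_{\lambda,\psi,\epsilon}$, and since the choice of norm on $\mathbb{R}^n$ was explicitly a matter of convenience this would equally well feed into the main theorem --- you just have to declare the switch (and note that \Cref{lem:connected_slice} goes through unchanged). The two routes then differ as follows. The paper exploits the sup norm to write $\Omega=\bigcap_j\{(a,b):(a,b_j)\in\Omega_{\lambda,\psi_j,\epsilon}\}$, invokes \Cref{lem:slice_n_eq_1} componentwise, and exhibits a common hyperbolic tube $\{\|b-am_0\|^2-\kappa^2a^2<\eta^2\}\subset\Omega$ via $\kappa=\min_j\kappa_j$, $\eta=\min_j\eta_j$; the only real computation is the $2\times2$ inversion inside \Cref{lem:slice_n_eq_1}, done once. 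You instead keep all $n$ components bundled into a single quadratic form and push the analogous computation up to a Schur complement and a Woodbury identity in arbitrary rank. The paper's route is lighter on linear algebra; yours avoids the somewhat artificial step of locating a smaller tube inside $\Omega$, since your $\{q<1\}$ already \emph{is} $\Omega$.
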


\begin{proof}
Since the empty set is ample, we can assume $\Omega = \Omega_{\lambda, \psi, \epsilon}$ is not empty.
Since \Cref{lem:connected_slice} guarantees
that $\Omega$ is connected, it suffices to prove that $\Omega$ contains a non-empty ample
set. Using the definition of the sup norm on $\mathbb{R}^n$, we get:
\[
  \Omega = \left\{(a, b) \;|\; \forall j,\, (a, b_j) \in \Omega_{\lambda, \psi_j, \epsilon} \right\}
\]
where $\psi_j$ is the composition of $\psi$ and the projection onto the
$j$-th factor of $\mathbb{R}^n$.
Since we assumed $\Omega$ is non-empty, each $\Omega_{\lambda, \psi_j, \epsilon}$ is non-empty.
So \Cref{lem:slice_n_eq_1} gives us positive numbers $\kappa_j$ and $\eta_j$ and some
numbers $m_{0, j}$ such that
$\Omega_{\lambda, \psi_j, \epsilon} = \{(a, b_j) \;|\; (b_j - am_{0, j})^2 - \kappa_j^2 a^2 < \eta_j^2\}$.
We denote by $m_0$ the vector with components $m_{0, j}$.
We set $\kappa = \min_j(\kappa_j)$ and $\eta = \min_j(\eta_j)$ so that
\[
  \left\{(a, b) \;|\; \|b - am_0\|^2 - \kappa^2a^2 < \eta^2 \right\} \subset \Omega
\]
The set on the left-hand side is non-empty and ample hence the proof is completed.
\end{proof}

\emergencystretch=1.5em
\hbadness=2000
\printbibliography

\addvspace{2cm}
{\footnotesize%
\noindent
P.~Massot: \textsc{Laboratoire de Mathématiques d’Orsay, CNRS, Université Paris-Saclay.}

\noindent
\texttt{patrick.massot@math.cnrs.fr}

\bigskip

\noindent
M.~Theillière: \textsc{Université du Luxembourg.}

\noindent
\texttt{melanie.theilliere@uni.lu}
}
\end{document}